\DeclareMathOperator {\der} {d}
\newcommand{\M}{\mathcal{M}}
\newcommand{\lf}[1]{#1^{^\leftarrow}}
\newcommand{\too}{\twoheadrightarrow} 
\newcommand{\MF}{\lf{\mathcal{M}}}
\theoremstyle {plain}
\newtheorem {theorem} {Theorem} [section]
\newtheorem {lemma} [theorem] {Lemma}
\newtheorem {corollary} [theorem] {Corollary}
\theoremstyle {definition}
\newtheorem {definition} [theorem] {Definition}
\title[Decomposable closed-graphed correspondences]{A fixed point
  theorem for closed-graphed decomposable-valued correspondences}
\thanks{We wish to thank Bill Johnson, Ben de Pagter, and Tony Wickstead for informative discussions 
on the local continuity properties of sublattices of Banach lattices.}
\author{Idione Meneghel} \address{School of Economics, The University
  of Queensland, Brisbane St Lucia QLD 4072, Australia.}
\author{Rabee Tourky} \address{School of Economics, The University of
  Queensland, Brisbane St Lucia QLD 4072, Australia.}
\date{\today}
\begin{document}

\begin{abstract}
  We prove a fixed point theorem for closed-graphed,
  decomposable-valued correspondences whose domain and range is a
  decomposable set of functions from an atomless measure space to a
  topological space. One consequence is an improvement of the fixed
  point theorem in Cellina, Colombo, and Fonda~\cite{Cel86}.
\end{abstract}

\maketitle

\section{Introduction}

Let $(S, \Sigma, \mu)$ be an atomless measure space and let $L$ be a
Banach space. Denote by $L_1(\mu,L)$ the Banach space of all
$L$-valued Bochner $\mu$-integrable functions (equivalence classes) on
$S$. A set $F\subseteq L_1(\mu,L)$ is \emph{decomposable} if for every
$f,g\in F$ and each $E\in \Sigma$ the decomposition $\chi_E g +
\chi_{S\setminus E} f$ is in $F$. A set $X\subseteq L_1(\mu, L)$ is
\emph{$\mu$-uniformly compact} if for each $\varepsilon >0$ there is
$E\in \Sigma$ with $\mu(E)<\varepsilon$ such that
\[		
  \{\chi_{S\setminus E} f \colon f\in X\}
\]
is compact when endowed with the metric $\delta(f,g) =
\|f-g\|_\infty\wedge 1$ and there is an integrable function $g\in L_1(\mu,L)$ satisfying
$\|g(s)\|\ge \|f(s)\|$ almost surely for all $f\in X$.
The following is one consequence of the theorem in
this paper. It illustrates the relation between the work here and the
literature on fixed point theory for decomposable sets.

\begin{corollary}\label{cor:intro}
  Let $F$ be a nonempty closed decomposable set in $L_1(\mu,L)$ and
  let $B\colon F\too F$ be a decomposable-valued correspondence with
  a closed graph. If there is a nonempty $\mu$-uniformly compact set
  $X\subseteq F$ satisfying $X \cap B(f) \ne\emptyset$ for each $f\in
  X$, then $B$ has a fixed point.
\end{corollary}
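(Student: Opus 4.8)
The plan is to obtain the corollary from the main fixed point theorem of this paper. I expect that theorem to produce a fixed point whenever $F\subseteq\Lz$ is closed and decomposable, $B\colon F\too F$ is a decomposable-valued correspondence with $\seqt$-closed graph, and some $\seqt$-compact set $X\subseteq F$ satisfies $X\cap B(f)\ne\emptyset$ for every $f\in X$. Two matters must then be settled: that the $L_1(\mu,L)$-data of the corollary sits inside $\Lz$ compatibly with decomposability and with the graph hypothesis, and---this is the real content---that a $\mu$-uniformly compact set is $\seqt$-compact. The first is routine, since the inclusion $L_1(\mu,L)\hookrightarrow\Lz$ carries decomposable sets to decomposable sets and is continuous for $\seqt$, so that $F$, the values $B(f)$, and the graph of $B$ all transport without change.

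The crux is the claim that the closure $\overline{X}$ is $\seqt$-compact, which I would prove by diagonal extraction. Given a sequence $(f_n)$ in $X$, for each $k$ choose $E_k\in\Sigma$ with $\mu(E_k)<1/k$ for which $\{\chi_{S\setminus E_k}f\colon f\in X\}$ is compact in the metric $\delta$; extracting successively and diagonalizing produces one subsequence along which $\chi_{S\setminus E_k}f_n$ converges uniformly off $E_k$ for every $k$ simultaneously. Since $\mu\bigl(\bigcap_k E_k\bigr)=0$, the uniform limits off the several $E_k$ agree on overlaps and patch to a measurable function $f$, defined almost everywhere, with $f_n\to f$ in measure. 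The integrable dominating function $g$ then upgrades this, by dominated convergence, to $f_n\to f$ in $L_1(\mu,L)$ and forces $f\in L_1(\mu,L)$. Hence every sequence in $X$ has an $L_1$-convergent subsequence, so $\overline{X}$ is $L_1$-compact and therefore $\seqt$-compact.

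It remains to check that $\overline{X}$ still traps $B$, that is, $\overline{X}\cap B(f)\ne\emptyset$ for every $f\in\overline{X}$. For $f\in X$ this is the hypothesis. For a limit $f=\lim_n f_n$ with $f_n\in X$, choose $y_n\in X\cap B(f_n)$; compactness of $\overline{X}$ lets me pass to a subsequence with $y_n\to y\in\overline{X}$, and since both $f_n\to f$ and $y_n\to y$ in $L_1(\mu,L)$ the closed graph of $B$ gives $y\in B(f)$, so $y\in\overline{X}\cap B(f)$. The theorem now applies with $\overline{X}$ in place of $X$ and yields $f^*\in F$ with $f^*\in B(f^*)$.

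The main obstacle is the diagonalization of the second paragraph: one must reconcile the supremum-norm metric $\delta$, in which $\mu$-uniform compactness is phrased, with the convergence-in-measure character of $\seqt$, verifying that the almost-everywhere limit assembled from the uniform limits off each $E_k$ is genuinely measurable and is the $\seqt$-limit of $(f_n)$, and that these extractions cohere with the integrable domination used at the close. The surrounding bookkeeping---transport of decomposability and the behaviour of the graph under passage to $\overline{X}$---is by comparison mechanical.
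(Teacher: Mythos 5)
Your diagonal extraction is sound as far as it goes: successive extraction over the sets $E_k$, patching the uniform limits, and upgrading convergence in measure to $L_1$-convergence via the integrable bound $g$ (Vitali/dominated convergence) correctly shows that $\overline{X}$ is compact in $L_1(\mu,L)$; and your observation that the trapping condition $X\cap B(f)\neq\emptyset$ passes to $\overline{X}$ by the closed graph is also correct. The gap is in what you then hand to the main theorem. Theorem~\ref{thm:main} and Corollary~\ref{cor:sav} are not stated for $\Lz$ with a convergence-in-measure topology: they live in $L(S,T)$, the space of \emph{genuine functions} (not equivalence classes), endowed with the topology of \emph{pointwise convergence}, and they require the distinguished set to be compact \emph{and metrizable} in that topology. $L_1$-compactness of a set of equivalence classes gives neither: a subset of $L_1(\mu,L)$ is not even a well-defined subset of $L(S,T)$ until versions are chosen, and $L_1$-convergence does not imply pointwise convergence of any particular versions. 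So the inference ``$\overline{X}$ is $L_1$-compact and therefore $\seqt$-compact,'' followed by ``the theorem now applies,'' substitutes a hypothesis the paper's theorem does not have for the one it does, and the final step is unjustified as written.

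What is missing is exactly the bridge the paper builds in its last section. First, one restricts $B$ to $\tilde F=\{f\in F\colon \|f(s)\|\le\|g(s)\| \text{ a.s.}\}$ and lifts it to measurable functions by setting $P(f)=\,]\tilde B(f)[$, where $]Z[$ denotes the set of all versions of the classes in $Z$; the domination is what makes pointwise a.e.\ convergence imply $L_1$-convergence, so that the $L_1$-closed graph of $B$ becomes a \emph{pointwise} sequentially closed graph for $P$, and property (1) of the definition of $\mu$-sequentially closed-graphed mappings (stability under modification on $\mu$-null sets) is what makes this lift of equivalence classes legitimate. Second---this is the paper's unlabelled lemma in the final section---one must \emph{select versions}: using an $L_1$-dense sequence in $X$, a full-measure set on which the sup-norm bounds off each $E_n$ control pointwise values, and a further full-measure set $S''$ on which all $L_1$-convergent subsequences converge pointwise, one produces a subset of $]X[$ that is compact and metrizable in the topology of pointwise convergence. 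Only then does Corollary~\ref{cor:sav} apply. Your argument contains the first half of this version-selection (the diagonal subsequence), but stopping at convergence in measure omits both the selection of a pointwise-compact set of versions and the lift of $B$, which together constitute the real content of the deduction.
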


A related result is the fixed point theorem for continuous functions
in Cellina~\cite{Cel84}, and its generalization in
Fryszkowski~\cite{Fry04}. There $B$ is a continuous function that maps
$F$ into a norm compact, not necessarily decomposable, set
$X$. Cellina, Colombo and Fonda's~\cite{Cel86} proved the existence of
approximate continuous selections for decomposable-valued upper
hemicontinuous correspondences. One application that appears in
\cite{Cel86} of this selection result is a fixed point theorem for
decomposable-valued correspondences. However, this corollary required
that the range of the correspondence be norm compact and
decomposable. It was latter shown by Cellina and
Mariconda~\cite{Cel89} that decomposable compact sets in $L_1(\mu,L)$
contain at most one element.

The result is motivated by our study of Bayesian games \cite{MT13} and
the foundations of strategic interaction with purely subjective
uncertainty \cite{GMT13}. The proof is informed by the concepts and
techniques of Athey~\cite{Ath01}, McAdams~\cite{McA03}, and
Reny~\cite{Ren11}.

\section{The fixed point theorem}

Let $(S,\Sigma,\mu)$ be an atomless probability space and let $T$ be a nonempty
topological space. Let $L(S,T)$ be the set of all functions, not
necessarily measurable, from $S$ to $T$. Endow $L(S,T)$ with the
topology of pointwise convergence.

For any $E\in \Sigma$ and any pair of functions $f,g\in L(S,T)$ let
$g_Ef \in L(S,T)$ be define as follows:
\[
g_Ef(s)=
 	\begin{cases}
		g(s) & \text{if $s\in E$}\,,\\
		f(s) & \text{if $s\in S \setminus E$}\,,
	\end{cases}
\]
for all $s\in S$.

A subset $F\subseteq L(S,T)$ is \emph{decomposable} if for each $E\in \Sigma$ 
and $f,g\in F$ we have $g_Ef\in F$. A set-valued (possibly empty valued) mapping 
$B\colon F\too F$ is a \emph{decomposable mapping} if its domain $F$ and values $B(f)$, for
all $f\in F$, are decomposable subsets of $L(S,T)$. A decomposable mapping $B$ is
\emph{$\mu$-sequentially closed graphed} if the following hold:
\begin{enumerate}
\item If $\mu(E)=0$ and $g\in B(f)$, then $h_E g \in B(f)$ and $g\in
  B(h_E f)$ for all $h\in F$.
\item $F$ is sequentially closed in $L(S,T)$.
\item $B$ has a sequentially closed graph in $F\times F$.
\end{enumerate}
A \emph{fixed point} of $B$ is a function $f\in F$ satisfying $f\in
B(f)$.

\begin{definition}
  A subset $Y\subseteq L(S,T)$ has the \emph{$\mu$-fixed point
    property} if every decomposable $\mu$-sequentially closed-graphed
  mapping $B\colon F\too F$ that satisfies the following:
  \begin{enumerate}
  \item[(a)] $Y\cap F\ne \emptyset$.
  \item[(b)] $Y\cap B(f)\not=\emptyset$ for all $f\in Y\cap F$.
  \end{enumerate}
  has a fixed point in $Y\cap F$.
\end{definition}

A subset $X$ of $L(S, T)$ is metrizable if it is a metrizable
topological space when endowed with the topology of pointwise
convergence. Sets (of measurable functions) that are compact and
metrizable in the topology of pointwise convergence are extensively
studied in \cite{Tul73,Tul74}.

\begin{theorem}\label{thm:main}
  Each compact and metrizable set $X \subseteq L(S,T)$ is a subset of
  a sequentially compact set $Y \subseteq L(S,T)$ that has the
  \emph{$\mu$-fixed point property}.
\end{theorem}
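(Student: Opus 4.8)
The plan is to reduce the pointwise topology on $X$ to a countable scaffolding, realise $Y$ as the image of a sequentially compact family of selections under a continuous diagonal map, and prove the fixed point property by a finite-dimensional approximation that exploits the atomlessness of $\mu$. First I would record the structural consequence of compactness and metrizability: since $X$ is compact and metrizable in the topology of pointwise convergence it is second countable, so there is a countable set $S_0\subseteq S$ such that the restriction $f\mapsto f|_{S_0}$ is a homeomorphism of $X$ onto its image and every evaluation $\mathrm{ev}_s\colon f\mapsto f(s)$ is continuous and factors through $f|_{S_0}$. This furnishes the countable index set on which all sequential arguments will be run, and shows that convergence in $X$ is controlled by countably many coordinates.

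Next I would introduce the diagonal selection map: for a selection $\sigma\colon S\to X$ put $\Delta(\sigma)(s)=\sigma(s)(s)$. Since the $s$-coordinate of $\Delta(\sigma)$ is $\mathrm{ev}_s(\sigma(s))$ and each $\mathrm{ev}_s$ is continuous on $X$, the map $\Delta$ is continuous from the pointwise topology on selections into $L(S,T)$, and the constant selections $\sigma\equiv f$ with $f\in X$ give $\Delta(\sigma)=f$, so any diagonal image contains $X$. I would then define $Y$ as the $\Delta$-image of a carefully chosen family $\MF$ of selections. The family of all measurable selections is too large: its diagonal image is compact by Tychonoff but not sequentially compact, as the Rademacher functions already show when $X$ is a two-point set. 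Hence the selections must be cut down to those adapted to a countable partition structure, whose diagonal image is a continuous image of a countable power $X^{\mathbb{N}}$, and therefore compact metrizable and sequentially compact. The sequential coreflection $\seqt$ is the right topology in which to carry this out, and $Y:=\Delta(\MF)$ is the resulting sequentially compact superset of $X$.

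The heart of the argument is the fixed point property. Given a decomposable, $\mu$-sequentially closed-graphed $B\colon F\too F$ with $Y\cap F\neq\emptyset$ and $Y\cap B(f)\neq\emptyset$ for every $f\in Y\cap F$, I would build approximate fixed points by discretising $X$ with a finite $\varepsilon$-net and $S$ with a finite measurable partition, using atomlessness of $\mu$ in the form of the Lyapunov–Richter convexity theorem to convexify the set of admissible decompositions. On each such finite decomposition set the induced correspondence $f\mapsto B(f)\cap Y$ has nonempty, suitably contractible values, with condition (b) supplying nonemptiness, so a classical fixed point theorem of Kakutani–Eilenberg–Montgomery type yields an approximate fixed point $f_k\in Y\cap F$. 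Since $Y\cap F$ is sequentially compact, I would extract $f_k\to f\in Y\cap F$ and then use the sequentially closed graph of condition (3) together with the null-set invariance of condition (1) to promote the approximate relations to $f\in B(f)$.

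The step I expect to be the main obstacle is the construction of $Y$ itself: one must reconcile sequential compactness, which forces a countably generated, essentially atomic partition structure, with the richness demanded by the approximation and by condition (b), since the decompositions permitted inside $F$ range over all of $\Sigma$ and need not respect any fixed countable partition. Controlling this interaction — guaranteeing that the limit $f$ of the block-adapted approximants still lies in $Y$ while $B$ is exercised along arbitrary measurable sets — together with the convexification that makes the finite-dimensional fixed point theorem applicable, is where the real work lies.
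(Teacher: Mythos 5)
Your construction of $Y$ contains a fatal gap, and it can be pinned down with a concrete counterexample. Take $T=\{0,1\}$ discrete, let $X$ be the set of the two constant functions, let $F\subseteq L(S,T)$ be the set of all measurable $\{0,1\}$-valued functions, and define
\[
B(f)=\{g\in F\colon g=1-f\ \ \mu\text{-almost everywhere}\}\,.
\]
Then $F$ and every value $B(f)$ are decomposable; condition (1) of the definition of $\mu$-sequentially closed-graphed mappings holds because membership in $B(f)$ depends only on almost-everywhere equivalence; $F$ is sequentially closed; and the graph is sequentially closed (if $g_n=1-f_n$ off a null set $N_n$ and $(f_n,g_n)\to(f,g)$ pointwise, then off $\bigcup_n N_n$ we get $g=1-f$, since pointwise convergence in $\{0,1\}$ means eventual constancy at each point). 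Clearly $B$ has no fixed point. Now consider your $Y$: functions that agree with some member of $X$ on each cell of a fixed countable measurable partition $\{S_k\}$ of $S$. This $Y$ is indeed sequentially compact and contains $X$, but it is closed under the involution $f\mapsto 1-f$, so hypotheses (a) and (b) of the $\mu$-fixed point property hold for the correspondence $B$ above: for every $f\in Y\cap F$, the function $1-f$ lies in $Y\cap B(f)$. Hence your $Y$ does not have the $\mu$-fixed point property. The lesson is that the binding constraint is not sequential compactness, which is where you locate the difficulty; it is that $Y$ must be \emph{poor enough}, in an order-theoretic sense, that hypothesis (b) becomes genuinely restrictive.

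That is exactly what the paper's construction achieves. There, $Y=\theta(\M)$, where $\M$ is the set of monotone functions from $[0,1]$ to the Cantor set $C$, $\theta(f)(s)=\psi(f(r(s)))(s)$, $\psi\colon C\to X$ is a Hausdorff--Alexandroff surjection, and $r$ is built from a maximal chain in $\Sigma$ — this is where atomlessness enters, not through Lyapunov--Richter convexity. (Your diagonal map $\Delta$ is close in spirit to $\theta$, which is the diagonal of the selection $s\mapsto\psi(f(r(s)))$; the decisive ingredient you are missing is monotonicity of $f$.) The one-directional chain structure of $\M$ does three jobs at once: it gives sequential compactness (a Helly-type selection property); it defeats flip-type counterexamples like the one above, because the reversal of a maximally alternating monotone pattern cannot be realized monotonically, so (b) fails there; and it supplies the topological structure of the values needed for an infinite-dimensional fixed point theorem. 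This last point is the second gap in your sketch: you assert that $f\mapsto B(f)\cap Y$ has ``suitably contractible values,'' but that is precisely the hard part. Decomposable sets are not convex, Lyapunov--Richter convexification concerns integrals of decomposable sets rather than the sets themselves in function space, and in your partition-adapted $Y$ the values can even be a two-point set, as the example shows. The paper manufactures the needed structure by transferring $B$ to a correspondence on $\M$, passing to right-continuous versions, and showing that a decomposable set intersected with $\M$ becomes a compact sub-semilattice which is a Peano continuum; Wojdys{\l}awski's theorem together with the continuity of $U\mapsto\inf U$ then makes the relevant sets compact absolute retracts, and the Eilenberg--Montgomery theorem closes the argument. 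None of this infrastructure is available along the route you outline, and the limit step you propose (promoting approximate fixed points) never gets off the ground because the approximate fixed points themselves are not shown to exist.
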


We have the following immediate corollary.

\begin{corollary}\label{cor:sav}
  Let $B\colon F\too F$ be a decomposable $\mu$-sequentially
  closed-graphed mapping. If for a nonempty compact and metrizable $X \subseteq
  F$ we have $X\cap B(f)\ne \emptyset$ for each $f\in F$, then $B$ has
  a fixed point.
\end{corollary}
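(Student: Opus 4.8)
The plan is to apply Theorem~\ref{thm:main} to the set $X$ and then check that the resulting superset satisfies the two hypotheses in the definition of the $\mu$-fixed point property. Since $X$ is nonempty, compact, and metrizable, Theorem~\ref{thm:main} furnishes a sequentially compact set $Y \subseteq L(S,T)$ with $X \subseteq Y$ that has the $\mu$-fixed point property. The given mapping $B$ is already assumed to be decomposable and $\mu$-sequentially closed-graphed, so it is an admissible candidate for that property; all that remains is to verify conditions (a) and (b).

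For (a), the inclusions $X \subseteq F$ and $X \subseteq Y$ yield $X \subseteq Y \cap F$, so $Y \cap F$ contains the nonempty set $X$ and is therefore nonempty. For (b), fix any $f \in Y \cap F$. Then in particular $f \in F$, so the hypothesis $X \cap B(f) \ne \emptyset$ applies; combining this with $X \subseteq Y$ gives $\emptyset \ne X \cap B(f) \subseteq Y \cap B(f)$, which is exactly condition (b).

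With both conditions established, the $\mu$-fixed point property of $Y$ produces a fixed point of $B$ lying in $Y \cap F$, which is in particular a fixed point of $B$, completing the argument. The entire substantive content is carried by Theorem~\ref{thm:main}; the corollary itself is a bookkeeping of set inclusions, so I anticipate no real obstacle. The only point requiring a moment's care is that the hypothesis supplies $X \cap B(f) \ne \emptyset$ for \emph{all} $f \in F$, whereas condition (b) asks only for $f \in Y \cap F$; since $Y \cap F \subseteq F$, the stronger hypothesis covers the weaker requirement at once.
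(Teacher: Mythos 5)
Your proof is correct and is precisely the argument the paper has in mind: the paper states this corollary as ``immediate'' from Theorem~\ref{thm:main}, and your verification of conditions (a) and (b) of the $\mu$-fixed point property is exactly the bookkeeping that makes it so. No gaps; the observation that the hypothesis on all $f\in F$ subsumes the requirement on $f\in Y\cap F$ is the only point of care, and you handled it.
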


 \section{Proof of Theorem~\ref{thm:main}}

 Let $C\subseteq [0,1]$ be the Cantor ternary set. Let $L([0,1],C)$ be
 the set of all functions from $[0,1]$ to $C$. A function
 $\theta\colon L([0,1],C)\to L(S,T)$ is \emph{sequentially pointwise
   continuous} if for any sequence $f_n\in L([0,1],C)$ converging
 pointwise to $f\in L([0,1],C)$, the sequence $\theta(f_n)$ converges
 pointwise to $\theta(f)$ in $L(S,T)$. 
 Extending the notion of decomposability to
 subsets of $L([0,1],C)$, we say that $D\subseteq L([0,1],C)$ is
 \emph{decomposable} if for any pair of functions $f,g\in D$ and Borel
 set $E\subseteq [0,1]$ we have $g_Ef\in D$.

\begin{lemma}\label{lem:theta}
  There is a function $\theta\colon L([0,1],C) \to L(S,T)$ satisfying
  the following:
  \begin{enumerate}
  \item $\theta$ is sequentially pointwise continuous.
  \item $\theta$ maps the constant functions in $L([0,1],C)$ onto $X$.
  \item If $D\subseteq L(S,T)$ is decomposable, then $\theta^{-1}(D)$
    is decomposable.
  \item If $f,g\in L([0,1],C)$ differ on exactly a countable set of
    points in $[0,1]$, then $\theta(f)$ and $\theta(g)$ differ on a
    $\mu$-zero measure set.
  \end{enumerate}
\end{lemma}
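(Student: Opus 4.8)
The plan is to build $\theta$ as a composite of two ingredients: a continuous parametrization of $X$ by the Cantor set, and a measurable coordinate on $S$ that spreads $\mu$ without atoms over $[0,1]$. First, since $X$ is nonempty, compact, and metrizable in the topology of pointwise convergence, the Alexandroff--Hausdorff theorem (every nonempty compact metric space is a continuous image of the Cantor set) supplies a continuous surjection $\phi\colon C\to X$; here continuity is for the subspace topologies, so convergence in $X$ literally means pointwise convergence in $L(S,T)$. Second, because $(S,\Sigma,\mu)$ is atomless, a standard Sierpi\'nski-type halving construction yields a measurable map $\psi\colon S\to[0,1]$ whose pushforward $\psi_*\mu$ is Lebesgue measure; in particular $\mu(\psi^{-1}(N))=0$ for every countable $N\subseteq[0,1]$, and $\psi^{-1}(E)\in\Sigma$ for every Borel $E\subseteq[0,1]$.

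With these fixed I would define
\[
  \theta(f)(s)\;=\;\phi\bigl(f(\psi(s))\bigr)(s),\qquad s\in S,
\]
which is a well-defined element of $L(S,T)$ (no measurability of $\theta(f)$ is needed), since $\phi(f(\psi(s)))\in X\subseteq L(S,T)$ is itself a function on $S$ that we evaluate at $s$. The four verifications are then routine. For (2), a constant function $f\equiv c$ gives $\theta(f)=\phi(c)$, so the constant functions map precisely onto $\phi(C)=X$. For (4), if $f$ and $g$ differ only on a countable set $N$, then $\theta(f)(s)\neq\theta(g)(s)$ forces $f(\psi(s))\neq g(\psi(s))$ and hence $\psi(s)\in N$, so $\theta(f)$ and $\theta(g)$ agree off $\psi^{-1}(N)$, a $\mu$-null set. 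For (3), the branch taken at $s$ in forming $g_Ef$ is governed solely by whether $\psi(s)\in E$, giving the identity $\theta(g_Ef)=\theta(g)_{\psi^{-1}(E)}\theta(f)$; since $\psi^{-1}(E)\in\Sigma$, this shows $\theta^{-1}(D)$ inherits decomposability from any decomposable $D$. For (1), pointwise convergence $f_n\to f$ yields $f_n(\psi(s))\to f(\psi(s))$ in $C$ for each fixed $s$, and continuity of $\phi$ then gives $\phi(f_n(\psi(s)))\to\phi(f(\psi(s)))$ in $X$, i.e.\ pointwise in $L(S,T)$; evaluating at $s$ gives $\theta(f_n)(s)\to\theta(f)(s)$.

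The conceptual weight rests entirely on the two existence statements, and I expect the measure-theoretic one to be the main obstacle: manufacturing $\psi$ with atomless (indeed Lebesgue) pushforward from the bare hypothesis that $\mu$ is atomless. The Cantor surjection $\phi$ is classical, and once both $\phi$ and $\psi$ are in place the four properties follow mechanically as above. I would also record the harmless point that $X\neq\emptyset$, which is in any case forced by property (2).
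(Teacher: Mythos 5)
Your proposal is correct and takes essentially the same approach as the paper: both compose a Hausdorff--Alexandroff continuous surjection from $C$ onto $X$ with a measurable coordinate map from $S$ to $[0,1]$ pushing $\mu$ forward to Lebesgue measure, define $\theta(f)(s)$ by evaluating the parametrized member of $X$ at $s$, and verify the four properties via the identity $\theta(g_Ef)=\theta(g)_{\psi^{-1}(E)}\theta(f)$. The only difference is in how the coordinate map is manufactured---the paper extracts it from a maximal chain $\{E_\lambda\}$ in $\Sigma$ indexed by measure, setting $r(s)=\inf\{\lambda\in\mathbb{Q}\colon s\in E_\lambda\}$, rather than using your Sierpi\'nski-type halving construction---and either route delivers exactly the two properties your verification uses.
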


\begin{proof}
  Order the members of $\Sigma$ as follows $E\le E'$ if either $E=E'$
  or $E\subseteq E'$ and $\mu(E) < \mu(E')$.  Consider a maximal chain
  $\{E_\lambda\}$ of this ordering containing $S$ and the empty set.
  Because $\mu$ is atomless $E_\lambda\mapsto \mu(E_\lambda)$ is a one
  to one onto mapping from $\{E_\lambda\}$ to $[0,1]$. So we can
  reindex the maximal chain by means of the identity $\lambda
  =\mu(E_\lambda)$.  Let $\mathbb{Q}$ be the set of rational numbers in
  $[0,1]$ and for each $s\in S$ let
  \[
  r(s)= \inf \{\lambda\in \mathbb{Q}\colon s\in E_\lambda\} = \sup
  \{\lambda\in \mathbb{Q}\colon s\notin E_\lambda\} \,.
  \]
  This is a measurable function satisfying $\mu(r^{-1}(E)) =0$ for any
  zero measure Borel subset of $[0,1]$.

  Because $X$ is compact and metrizable, the Hausdorff-Alexandroff
  Theorem says that there is a continuous function $\psi$ mapping
  $C$ onto $X$.

  Define the $\theta\colon L([0,1],C)\to L(S,T)$ as follows:
  \[
  \theta(f)(s)= \psi(f(r(s)))(s)\,,
  \]
  for all $f\in L([0,1],C)$ and $s\in S$. Note that
  \[
  \theta(g_{E} f) = \theta(g)_{r^{-1}(E)}\theta(f)
  \]
  for any $f,g\in L([0,1],C)$ and $E\subseteq [0,1]$.

  We prove that $\theta$ has the required properties:
  \begin{enumerate}
  \item If $f_n\in L([0,1],C)$ is a sequence that converges pointwise
    to $f\in L([0,1],C)$, then for any $\alpha\in [0,1]$ the sequence
    $\psi(f_n(\alpha))$ converges to $f(\alpha)$ in $X$. Thus, for all
    $s\in S$ the sequence $\psi(f_n(\alpha))(s)$ converges in
    $T$. This tells us that $\theta(f_n)$ converges pointwise to
    $\theta(f)$ in $L(S,T)$.
  \item If $f(\alpha) = c$ for all $\alpha\in [0,1]$, then
    $\theta(f)(s) = \psi(c)(s)$ for all $s$.
  \item Let $D\subseteq L(S,T)$ be a decomposable set. If $f,g\in
    \theta^{-1}(D)$ and $E$ is a Borel subset of $[0,1]$, then
    $r^{-1}(E)$ is in $\Sigma$ and $\theta(g_{E} f) =
    \theta(g)_{r^{-1}(E)}\theta(f)\in D$. Thus, $g_{E} f\in
    \theta^{-1}(D)$.
  \item If $f,g\in L([0,1],C)$ and $g= h_{E} f$ for some zero measure
    Borel set $E\subseteq [0,1]$, then $\theta(g)= \theta(h_{E} f) =
    \theta(h)_{r^{-1}(E)}\theta(f)$ and $r^{-1}(E)=0$.
  \end{enumerate}
\end{proof}

Fix a function $\theta\colon L([0,1],C)\to L(S,T)$ satisfying the
properties in Lemma~\ref{lem:theta}.  Let $\M$ be the set of monotone
functions from $[0,1]$ to $C$. This is a sequentially compact set in
the topology of pointwise convergence. Let $Y= \theta(\M)$, which is
also a sequentially compact subset of $L(S,T)$, because $\theta$ is
sequentially continuous. The set $Y$ contains $X$ because $\M$ contains
the constant functions, and $\theta$ maps the constant functions onto
$X$. We want to show that $Y$ has the $\mu$-fixed point property.

Fix a set valued mapping $B\colon F\too F$ that is decomposable,
$\mu$-sequentially upper hemicontinuous, and that satisfies the
following:
\begin{enumerate}
\item[(a)] $Y\cap F\ne \emptyset$.
\item[(b)] $Y\cap B(f)\not=\emptyset$ for all $f\in Y\cap F$.
\end{enumerate}
We need to show that $B$ has a fixed point in $Y$.

Let $\mathcal{F} = \theta^{-1}(F)$, which is a subset of $L([0, 1],
C)$, and note that it is decomposable and sequentially closed, because
of properties (3) and (1), respectively, of Lemma~\ref{lem:theta}. For
each $f\in \mathcal{F}$ let
\[
P(f) = \theta^{-1}(B(\theta(f))\,. 
\]
We record the following properties of the mapping $P\colon
\mathcal{F}\too \mathcal{F}$.

\begin{lemma}\label{lem:P}
  The following hold true:
  \begin{enumerate}
  \item $\mathcal{F}$ is sequentially closed, decomposable, and
    $\M\cap \mathcal{F}$ is non-empty.
  \item For each $f\in \mathcal{F}$, the set $P(f)$ is sequentially
    closed and decomposable.
  \item $P$ has a sequentially closed graph in
    $\mathcal{F}\times\mathcal{F}$.
  \item If $E\subseteq [0,1]$ is countable, then $g\in P(f)$ implies
    that $ h_{E} g\in P(f)$ and $g\in P(h_{E} f)$ for all $h\in
    \mathcal{F}$.
  \item For any $f\in \M\cap \mathcal{F}$, the set $P(f)\cap \M$ is
    nonempty.
  \item If $f$ is a fixed point of $P$, then $\theta(f)$ is a fixed
    point of $B$.
  \end{enumerate}
\end{lemma}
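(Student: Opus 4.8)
The plan is to derive each of the six items by transporting the corresponding property of $B$ and $F$ through $\theta$, using the four conclusions of Lemma~\ref{lem:theta} together with the defining conditions of a $\mu$-sequentially closed-graphed mapping and hypotheses (a), (b). There is no new construction to make; the whole content lies in verifying that the pullback along $\theta$ preserves the relevant structure.

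For item (1), I would argue that $\mathcal{F}=\theta^{-1}(F)$ is sequentially closed because $\theta$ is sequentially pointwise continuous and $F$ is sequentially closed: if $f_n\to f$ with every $\theta(f_n)\in F$, then $\theta(f_n)\to\theta(f)$, forcing $\theta(f)\in F$. Decomposability of $\mathcal{F}$ is immediate from part~(3) of Lemma~\ref{lem:theta}, and $\M\cap\mathcal{F}\neq\emptyset$ follows by taking $y\in Y\cap F$ from hypothesis (a) and writing $y=\theta(m)$ with $m\in\M$, which is possible since $Y=\theta(\M)$. Item (2) is the same transfer applied to the value $B(\theta(f))$: this set is decomposable, so its $\theta$-preimage $P(f)$ is decomposable by Lemma~\ref{lem:theta}(3); and it is sequentially closed because a convergent sequence in $B(g)$ has its limit in $F$ (as $F$ is sequentially closed) and hence in $B(g)$ (by the sequentially closed graph of $B$), so sequential continuity of $\theta$ passes closedness to $P(f)$. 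Item (3) uses the closed graph of $B$ directly: if $(f_n,g_n)\to(f,g)$ in $\mathcal{F}\times\mathcal{F}$ with $\theta(g_n)\in B(\theta(f_n))$, then applying $\theta$ to both coordinates and invoking the sequentially closed graph of $B$ gives $\theta(g)\in B(\theta(f))$, i.e. $g\in P(f)$. Item (6) is a tautology: $f\in P(f)$ unwinds to $\theta(f)\in B(\theta(f))$.

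The substantive items are (4) and (5). For (4) I would invoke the intertwining identity $\theta(g_Ef)=\theta(g)_{r^{-1}(E)}\theta(f)$ recorded in the proof of Lemma~\ref{lem:theta}. A countable $E\subseteq[0,1]$ is a Borel null set, so $N:=r^{-1}(E)$ lies in $\Sigma$ with $\mu(N)=0$. Given $g\in P(f)$, that is $\theta(g)\in B(\theta(f))$, the identity yields $\theta(h_Eg)=\theta(h)_N\theta(g)$ with $\theta(h)\in F$; the null-set invariance condition (the first clause in the definition of a $\mu$-sequentially closed-graphed mapping) then gives $\theta(h_Eg)\in B(\theta(f))$, so $h_Eg\in P(f)$. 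The second assertion is symmetric: from $\theta(h_Ef)=\theta(h)_N\theta(f)$ the same invariance condition gives $\theta(g)\in B(\theta(h_Ef))$, hence $g\in P(h_Ef)$. For (5), if $f\in\M\cap\mathcal{F}$ then $\theta(f)\in\theta(\M)\cap F=Y\cap F$, so hypothesis (b) provides $y\in Y\cap B(\theta(f))$; writing $y=\theta(m)$ with $m\in\M$ produces $m\in P(f)\cap\M$.

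I expect the only genuinely delicate point to be the bookkeeping in item (4): one must recognize that a countable modification upstairs in $L([0,1],C)$ corresponds, through $r$, to a $\mu$-null modification downstairs in $L(S,T)$, and then match this against the exact form of the null-set invariance condition so that the right function ($\theta(h)$) plays the role demanded by that condition. A secondary point worth isolating in advance is that each value $B(g)$ is sequentially closed, used in item (2); this is not assumed outright but follows from sequential closedness of $F$ combined with the closed graph of $B$. Everything else is a routine transfer of structure through $\theta$.
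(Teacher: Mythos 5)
Your proposal is correct and follows essentially the same route as the paper: every item is obtained by pulling the structure of $B$ and $F$ back through $\theta$ using Lemma~\ref{lem:theta}, hypothesis (a)/(b), and the null-set invariance clause in the definition of a $\mu$-sequentially closed-graphed mapping, which is exactly what the paper's (much terser) proof does. Your two elaborations---deriving sequential closedness of each value $B(\theta(f))$ from the closed graph of $B$ together with sequential closedness of $F$, and routing item (4) through the identity $\theta(g_Ef)=\theta(g)_{r^{-1}(E)}\theta(f)$ with $\mu(r^{-1}(E))=0$---are precisely the details the paper leaves implicit.
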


\begin{proof}
  (1) and (2) are consequences of (1) and (3) of
  Lemma~\ref{lem:theta}. (3) is a consequence of (1) of
  Lemma~\ref{lem:theta}. (4) follows from (4) of
  Lemma~\ref{lem:theta}. (5) holds because $B(f)\cap Y$ is not empty
  for any $f\in Y\cap F$. Finally, (6) holds because if $f\in P(f)$,
  then $\theta(f)\in B(\theta(f))$.
\end{proof}

So our task now is to show that $P$ has a fixed point in $\M$.

Let $\mathcal{Z}= \mathcal{F} \cap \M$, which is sequentially closed
and nonempty by (1) of Lemma~\ref{lem:P}. Define the mapping $Q\colon
\mathcal{Z}\to \mathcal{Z}$ by letting
\[
Q(f)= P(f)\cap\M\,,
\]
for all $f \in \mathcal{Z}$. This is a nonempty valued correspondence
with sequentially closed graph in $\mathcal{Z}\times\mathcal{Z}$,
because of (2) of Lemma~\ref{lem:P}. For any $f\in \M$ let $\lf{f}$ be
the right continuous version of $f$; setting $\lf{f}(1)=1$ for all
$f\in \M$. Recall that $\lf f$ differs from $f$ over a countable
subset of $[0,1]$. Also, if $\lf g = \lf f$, then $g$ differs from $f$
on a countable subset of $[0,1]$. In particular, $Q(f) = Q(g)$ and if
$f \in Q(h)$, then $g \in Q(h)$. This is, as a result of property (1)
of the definition of $\mu$-sequentially graphed mappings and (4) of
Lemma~\ref{lem:theta}.

For $\mathcal{G}\subseteq \M$, we write $\lf {\mathcal{G}}$ for the
set $\{\lf f\colon f\in \mathcal{G}\}$. For each $f\in
\lf{\mathcal{Z}}$ choose an arbitrary $g\in \mathcal{Z}$ satisfying
$\lf g = f$ and let
\[
\tilde Q(f) = \lf{Q(f)}\,.
\]
The mapping $\tilde Q\colon \lf{\mathcal{Z}}\too \lf{\mathcal{Z}}$ is
nonempty valued, because $Q$ is nonempty valued. Further, if $f$ is a
fixed point of $\tilde Q$, then any $g\in \mathcal{Z}$ satisfying $\lf
g = f$ is a fixed point of $Q$, and the required fixed point of
$P$. So we are done if we show that $\tilde Q$ has a fixed point.

Endow $\M$ with the pseudometric  
\[
	\delta(f,g) = \int_0^{1} |f(a)-g(a)| \der a\,.
\]
Notice that $(\lf{\M},\delta)$ and $(\lf{\mathcal{Z}},\delta)$ are a
compact metric spaces. Furthermore, $\tilde Q$ has a $\delta$-closed
graph in $\lf{\mathcal{Z}}\times \lf{\mathcal{Z}}$.

Order the set $\MF$ of right-continuous monotone functions by means of
the pointwise ordering whereby $f\ge g$ if $f(\alpha)\ge g(\alpha)$
for all $\alpha\in [0,1]$. The set $(\MF,\delta)$ is a
$\delta$-compact topological meet semilattice using the terminology in \cite{GHLMS80}. 

Let $\Gamma$ be the set of all nonempty closed subsets of $(\MF,\delta)$ endowed with the 
metric induced by Hausdorff distances. For any $U\subseteq \MF$ we write $\inf U$ 
for the pointwise inf of the set of functions in $U$. This is a monotone right continuous
function in $\MF$ and the infimum of the set $U$ in the lattice $\MF$. Notice that 
$\inf U=\inf \overline U$, where $\overline U$ is the closure of $U$ in $(\MF,\delta)$.
This is because if $f_n$ is a sequence in $U$ that $\delta$-converges to $f$, then 
it pointwise converges to some $g\in \M$ satisfying $\lf{g}=f$. But $g(a)\leq f(a)$ for
all $a\in [0,1]$. We will now show that the function $U\mapsto \inf U$ from $\Gamma$ to $(\MF,\delta)$ 
is continuous.

\begin{lemma}\label{lem:cont}
 If a sequence $U_n\in \Gamma$ converges to $U\in \Gamma$, then $\inf U_n$ converges to $\inf U$ in $(\MF,\delta)$. 
\end{lemma}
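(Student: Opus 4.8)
The plan is to prove a quantitative modulus of continuity, namely $\delta(\inf U_n,\inf U)\le 6\sqrt{d_H(U_n,U)}$, where $d_H$ is the Hausdorff metric on $\Gamma$; since $d_H(U_n,U)\to 0$ this gives the claim. Write $v=\inf U$, $v_n=\inf U_n$, and $\ve_n=d_H(U_n,U)$, and recall that every function in $\MF$ is nondecreasing and takes values in $[0,1]$ (the nonincreasing case being symmetric). I would bound $\int_0^1(v-v_n)_+\,\der a$ and $\int_0^1(v_n-v)_+\,\der a$ separately, each from one of the two defining inequalities of Hausdorff convergence. The two estimates are mirror images, so I describe only the first.

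First I would record the elementary consequence of Hausdorff closeness. Since $U$ is closed in the compact space $(\MF,\delta)$ it is compact, so for each $g\in U_n$ there is $f\in U$ with $\delta(g,f)\le\ve_n$. As $v=\inf U\le f$ pointwise, $(v-g)_+\le(f-g)_+\le|f-g|$, and hence $\int_0^1(v-g)_+\,\der a\le\ve_n$ for every $g\in U_n$. The decisive step is to upgrade this $L^1$ bound to a localized one using monotonicity. Fix $a_0\in(0,1]$ and set $c=v_n(a_0)=\inf_{g\in U_n}g(a_0)$. Choosing $g\in U_n$ with $g(a_0)$ arbitrarily close to $c$ and using that $g$ is nondecreasing, so that $g\le g(a_0)$ on $[0,a_0]$, a passage to the limit gives
\[
\int_0^{a_0}\bigl(v(a)-v_n(a_0)\bigr)_+\,\der a\le\ve_n .
\]

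Finally I would turn this localized estimate into a pointwise one by averaging over a window. For $0<\rho<a_0$ the monotonicity of $v$ gives $v(a)\ge v(a_0-\rho)$ on $[a_0-\rho,a_0]$, so the displayed inequality forces $\bigl(v(a_0-\rho)-v_n(a_0)\bigr)_+\le\ve_n/\rho$; adding the increment $v(a_0)-v(a_0-\rho)\ge 0$ yields $\bigl(v(a_0)-v_n(a_0)\bigr)_+\le\bigl(v(a_0)-v(a_0-\rho)\bigr)+\ve_n/\rho$. Integrating over $a_0\in(\rho,1]$, the increment telescopes into $\int_{1-\rho}^1 v-\int_0^\rho v\le\rho$ because $0\le v\le 1$, while the contribution of $a_0\in(0,\rho]$ is at most $\rho$; altogether $\int_0^1(v-v_n)_+\,\der a\le 2\rho+\ve_n/\rho$. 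Optimizing with $\rho=\sqrt{\ve_n}$ gives $\int_0^1(v-v_n)_+\,\der a\le 3\sqrt{\ve_n}$, and the symmetric argument (using that each $f\in U$ is $\ve_n$-close to some $g\in U_n$, whence $\int_0^1(v_n-f)_+\,\der a\le\ve_n$) gives the same bound for $\int_0^1(v_n-v)_+\,\der a$. Adding the two proves the modulus of continuity. The hard part is that $\delta$ is an integral, hence global, metric while $\inf$ is taken pointwise, so closeness of the families does not directly control the pointwise infima; monotonicity is exactly what defeats this, since a function in $\MF$ cannot dip below the others at a point without being small on an entire initial segment, and it is this rigidity that the localized estimate exploits.
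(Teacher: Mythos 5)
Your proof is correct, and it takes a genuinely different route from the paper's. The paper argues qualitatively by compactness: it extracts a $\delta$-accumulation point $f^*$ of $\inf U_n$, forms the tail unions $V_n=\bigcup_{m\ge n}U_m$ and their infima $g_n=\inf V_n$, shows by contradiction that the increasing pointwise limit $g$ of the $g_n$ is squeezed between the left-continuous version of $\inf U$ and $\inf U$ itself, and then sandwiches $\inf U_n\ge g_n$ to force $f^*=\inf U$. You instead prove a quantitative modulus, $\delta(\inf U_n,\inf U)\le 6\sqrt{d_H(U_n,U)}$, and every step checks out: the $L^1$ bound $\int_0^1(v-g)_+\,\der a\le\ve_n$ from Hausdorff closeness (attainment via compactness of the closed set $U$ is fine, and could anyway be replaced by near-minimizers), the localization $\int_0^{a_0}(v(a)-v_n(a_0))_+\,\der a\le\ve_n$ by monotone convergence over near-minimizers of $g\mapsto g(a_0)$, the pointwise bound $(v(a_0)-v_n(a_0))_+\le(v(a_0)-v(a_0-\rho))+\ve_n/\rho$, the telescoping bound $\int_{1-\rho}^1 v-\int_0^\rho v\le\rho$ after integration, and the choice $\rho=\sqrt{\ve_n}$ (legitimate since $\delta\le 1$ on $\MF$ forces $\ve_n\le1$); the second positive part is indeed the mirror image under swapping $(U,v)$ and $(U_n,v_n)$. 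Notably, both proofs hinge on the identical rigidity phenomenon — a nondecreasing function cannot dip at a point without being small on the whole initial segment, which is how the paper derives its contradiction $\delta(h,h')\ge(d_1-d_2)\gamma$ — but your packaging of it buys more: the map $U\mapsto\inf U$ is H\"older-$\tfrac12$, hence uniformly continuous, on $(\Gamma,d_H)$, and you avoid compactness of $(\MF,\delta)$ and all subsequence extractions. The paper's argument buys less but stays within the lattice-theoretic toolkit (tail unions, one-sided continuous versions) it uses elsewhere; for the later application — continuity of the retraction $U\mapsto\inf U$ onto $\lf{(\mathcal{G}\cap\M)}$ — plain sequential continuity suffices, so either proof serves.
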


\begin{proof}
Let $f=\inf U$. For each $n$ let $f_n =\inf U_n$.  All of these are in $\MF$. 
Let $f^*$ be an accumulation point in $(\MF,\delta)$ of $f_n$, by moving to a 
subsequence we shall suppose that $f_n$ converges to $f^*\in \MF$. We want to 
show that $f^*=f$.
	
For each $n$ let $V_n=\cup_{m\ge n} U_m$. 
Let $g_n=\inf V_n$ for each $n$, and recall that $g_n\in \MF$. The sequence 
$g_n$ is increasing pointwise, so let $g$ be $\sup \{g_n\}$ (taking the pointwise supermum), 
which is in $\M$ but not necessarily right continuous.  
Now $U$ is in the closure of $V_n$ for each $n$. Thus,
$g_n= \inf (V_n\cup U) \leq \inf U =f$ for all $n$. In particular, $g(a)\leq f(a)$
for all $a\in [0,1]$. 

Let $\tilde f$ be the left continuous version of $f$, setting $\tilde f(0)= 0$.
 Suppose by way of contradiction that for some $a\in [0,1]$ 
we have $g(a) < d_2< d_1 < \tilde f(a)$. There is $\gamma>0$ such that
$\tilde f(a- \gamma) > d_1$. Pick $n$ large enough such that 
$\delta(h,U) < (d_1-d_2)\gamma$ for all $h\in V_n$. Pick $h\in V_n$ satisfying
$h(a) < d_2$ and $h'\in U$ satisfying $\delta(h,h') < (d_1-d_2)\gamma$.
But $d_1< \tilde f(b) \leq f(b) \leq h'(b)$ 
for all $a-\gamma\leq  b$. Thus,   $\delta(h,h') \ge (d_1-d_2)\gamma$.
This is impossible. We conclude that $g(a)\ge \tilde f(a)$
for all $a$.  Thus, $\lf{g} =f$ and $g_n$ converges to $f$. 

Now note  that $f_n\ge g_n$ for all $n$.  For each $a$,
for every $\epsilon>0$, and $n$ there is $m\ge n$ and $h\in U_m$ such that $|h(a) - g_n(a) | < \epsilon$. 
But $h(a) \ge f_m(a) \ge g_n(a)$. Thus,  $f^*=f$. 
\end{proof}

The result of Wojdys{\l}awski~\cite{Woj39} (cf. \cite{CS78}) 
tells us that when endowed with the metric induced by Hausdorff distances, 
the family of all nonempty closed subsets of a Peano continuum is an absolute
 retract. We employ this and the previous lemma to establish the next result.
  
\begin{lemma}
  If $\mathcal{G}\subseteq L([0,1],C)$ is decomposable and
  $\mathcal{G}\cap \M$ is nonempty and sequentially closed, then
  $(\lf{({\mathcal{G}\cap \M)}},\delta)$ is a compact absolute
  retract.
\end{lemma}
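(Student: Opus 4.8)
The plan is to exhibit $K:=\lf{(\mathcal{G}\cap\M)}$ as a retract of the hyperspace of a Peano continuum and then invoke Wojdys{\l}awski's theorem. First I would record that $K$ is compact: since $\mathcal{G}\cap\M$ is sequentially closed in the sequentially compact set $\M$, it is sequentially compact, and a routine argument (pass to a pointwise convergent subsequence and apply dominated convergence, using that $\lf{}$ changes a function only on a countable set) shows that $K$ is $\delta$-closed in the compact space $(\MF,\delta)$.

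The decisive structural fact is that decomposability makes $K$ closed under the meet of $\MF$. Indeed, for $f,g\in\mathcal{G}\cap\M$ put $E=\{a\in[0,1]\colon g(a)<f(a)\}$; then $g_Ef$ is exactly the pointwise minimum $f\wedge g$, which is again monotone, so $g_Ef\in\mathcal{G}\cap\M$ by decomposability. Passing to right-continuous versions gives $\lf f\wedge\lf g=\lf{(f\wedge g)}\in K$, so $(K,\wedge)$ is a closed subsemilattice of $(\MF,\delta)$. By compactness $K$ is then closed under arbitrary pointwise infima, so $\inf U\in K$ for every nonempty $U\subseteq K$; combined with Lemma~\ref{lem:cont} this says that $\inf$ restricts to a continuous map from the hyperspace $2^{K}$ of nonempty closed subsets of $K$ onto $K$, with continuous section $f\mapsto\{f\}$ (since $\inf\{f\}=f$). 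Thus $K$ is a retract of $2^{K}$.

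Next I would show that $K$ is a Peano continuum, which is the heart of the argument, since here decomposability supplies explicit arcs that the abstract semilattice structure alone cannot. Compactness is in hand, so it remains to prove connectedness and local connectedness. Given $f,g\in\mathcal{G}\cap\M$, set $E=\{g<f\}$ and, for $t\in[0,1]$, let $h_t=g_{E\cap[0,t]}f$; one checks that each $h_t$ is monotone (it equals $f\wedge g$ on $[0,t]$ and $f$ on $(t,1]$, and the values match across $t$ because $f\wedge g\le f$), hence $h_t\in\mathcal{G}\cap\M$, while $t\mapsto\lf{h_t}$ is $\delta$-continuous with $\lf{h_0}=\lf f$ and $\lf{h_1}=\lf f\wedge\lf g$. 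This produces an arc in $K$ from $\lf f$ to $\lf f\wedge\lf g$ lying within $\delta$-distance $\delta(f,f\wedge g)\le\delta(f,g)$ of $\lf f$; concatenating the analogous arc from $\lf g$ yields a path from $\lf f$ to $\lf g$ through $\lf f\wedge\lf g$. The distance bound shows these paths remain in a controlled neighborhood, giving uniform local arcwise connectedness and hence that $K$ is a Peano continuum.

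Finally, Wojdys{\l}awski's theorem applies to the Peano continuum $K$ to give that $2^{K}$, with the Hausdorff metric, is an absolute retract. Since a retract of an absolute retract is an absolute retract, the retraction $\inf\colon 2^{K}\to K$ from the second step shows that $K=\lf{(\mathcal{G}\cap\M)}$ is a compact absolute retract. I expect the main obstacle to be the third step: verifying that the interpolating functions $h_t$ are genuinely monotone and that the resulting arcs have $\delta$-lengths small enough to deliver local connectedness, so that $K$ is really a Peano continuum rather than merely a connected compact semilattice.
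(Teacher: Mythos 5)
Your proposal is correct and follows essentially the same route as the paper: decomposability yields both the meet-semilattice structure (via $g_Ef=f\wedge g$ for $E=\{g<f\}$) and the monotone interpolating arcs, $\lf{(\mathcal{G}\cap\M)}$ is shown to be a Peano continuum, and Wojdys{\l}awski's theorem together with the continuity of $U\mapsto\inf U$ (Lemma~\ref{lem:cont}) realizes it as a retract of its hyperspace of nonempty closed subsets. The only genuine difference is in the local-connectedness step: the paper builds path-connected neighborhood bases from order intervals $V_n=\{[\inf U_n,h]\colon h\in U_n\}$, invoking Lemma~\ref{lem:cont} a second time, whereas you construct explicit Lipschitz arcs $t\mapsto\lf{h_t}$ through $\lf{f}\wedge\lf{g}$ with the uniform bound $\delta(f,f\wedge g)\le\delta(f,g)$ --- a slightly more elementary and self-contained verification --- and you also make explicit the closure of $\lf{(\mathcal{G}\cap\M)}$ under arbitrary infima (needed for the retraction to land in the right set), a point the paper leaves implicit.
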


\begin{proof}
  The set $\lf{(\mathcal{G}\cap \M)}$ is nonempty and compact. If
  $f,g\in {\mathcal{G}}\cap \M$, then $f\wedge g$ is monotone and
  differs from $f,g$ on Borel sets.  Thus, $f\wedge g$ is in
  ${\mathcal{G}}\cap \M$. Noting that $(f\wedge g)^{\leftarrow} =
  f^\leftarrow\wedge g^\leftarrow$, we see that $(({\mathcal{G}}\cap
  \M)^{\leftarrow},\delta)$ is a sub-semilattice of $\MF$. 
  We show that it is locally connected, and thus a Peano continuum.

  First, notice that if $f,g\in {\mathcal{G}}\cap \M$ and $f\ge g$, then
  $g_{[0,\alpha)} f\in {\mathcal{G}}\cap \M$ for all $\alpha\in
  [0,1]$. Thus, if $f,g\in ({\mathcal{G}}\cap \M)^\leftarrow$, then
  $g_{[0,\alpha)} f\in ({\mathcal{G}}\cap \M)^\leftarrow$ for all
  $\alpha\in [0,1]$.

  If $U_n$ is a neighborhood base in $(({\mathcal{G}}\cap
  \M)^{\leftarrow},\delta)$ of $f\in ({\mathcal{G}}\cap
  \M)^{\leftarrow}$, then $\inf U_n$ converges to $f$ by
Lemma~\ref{lem:cont}. Thus,  
	 $V_n=\{[\inf \{U_n\},h]\colon h\in U_n\}$, where 
	 $[g,h]=\{h' \in \lf{(\mathcal{G}\cap \M)} \colon g\leq h'\leq h\}$,  is
  a neighborhood base at $f$.  Let $g = \inf U_n$ and $h\in V_n$. For
  any $\alpha\in [0,1]$ the function $g_{[0,\alpha)} h$ is in
  $V_n$. Thus, $V_n$ is path connected and  $\lf{(\mathcal{G}\cap \M)}$
  is Peano continuum. 
  
  The collection $\Gamma^*$ of nonempty closed subsets of  
  $\lf{(\mathcal{G}\cap \M)}$ is an absolute retract and the mapping
  $U\mapsto \inf U$ from $\Gamma^*$ to   $\lf{(\mathcal{G}\cap \M)}$
  is a continuous retract. This concludes the proof.
\end{proof}

The metric space $(\lf{\mathcal{Z}},\delta)$ is a
compact absolute retract, and for each $f\in \lf{Z}$ the set $\tilde
Q(f)$ is an absolute retract. Noting that $\tilde Q$ has a closed
graph in $(\lf{\mathcal{Z}},\delta) \times (\lf{\mathcal{Z}},\delta)$,
by the Eilenberg and Montgomery~fixed point theorem \cite{EM46} the
correspondence $\tilde Q$ has a fixed point. This concludes the proof of Theorem~\ref{thm:main}.

\section{Proof of  Corollary~\ref{cor:intro}} 

Let $X\subseteq F$ be the $\mu$-essentially compact subset of $L_1(\mu, L)$ and 
fix an integrable function $g\in L_1(\mu,L)$ satisfying
$\|g(s)\|\ge \|f(s)\|$ almost surely for all $f\in X$.
Let $\tilde F$ be the functions $f\in F$ satisfying $\|f(s)\|\le \|g(s)\|$
almost surely.  The restriction $\tilde B\colon \tilde F\too \tilde F$ of $B$
satisfies the conditions of Corollary~\ref{cor:intro}.

For each subset $Z$ of $L_1(\mu, L)$
we write $]Z[$ for all the measurable functions in each of equivalence classes 
of $Z$. For any $f\in ]\tilde F[$ let $P(f) = ]\tilde B(f)[$. Notice that 
$P$ is a decomposable $\mu$-sequentially closed-graphed mapping.
Corollary~\ref{cor:intro} is a consequence of Corollary~\ref{cor:sav}
and the following result. 

\begin{lemma}
If $X$ is a $\mu$-essentially compact subset of $L_1(\mu, L)$, then 
$]X[$ contains a subset that is compact and metrizable in the 
topology of pointwise convergence. 
\end{lemma}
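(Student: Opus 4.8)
The plan is to exploit the uniform (essential-supremum) compactness of $X$ on the complements of small sets in order to build a single full-measure set on which a family of genuine representatives is uniformly controlled, and then to observe that a set compact in a sup-type metric is automatically compact and metrizable in the coarser topology of pointwise convergence. First I would extract from the definition of $\mu$-essential compactness sets $E_n \in \Sigma$ with $\mu(E_n) < 2^{-n}$ such that each $\{\chi_{S \setminus E_n} f \colon f \in X\}$ is compact in the metric $\delta(f,g) = \|f-g\|_\infty \wedge 1$. Replacing $E_n$ by $\bigcap_{k \le n} E_k$ I may assume the sets $A_n = S \setminus E_n$ increase; a short patching argument (the map sending $(\chi_{S\setminus E_1} f, \dots, \chi_{S\setminus E_n} f)$ to $\chi_{A_n} f$ is $1$-Lipschitz for the essential-sup metrics, so it carries a compact set onto one containing all $\chi_{A_n} f$) shows that $\{\chi_{A_n} f \colon f \in X\}$ remains totally bounded in $\|\cdot\|_\infty$ on $A_n$. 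Then $D = \bigcup_n A_n$ has full measure, and the problem reduces to selecting, for each equivalence class in $X$, a measurable representative on $D$ so that the resulting family is compact and metrizable for pointwise convergence.

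The organizing device is the Fréchet-type metric $\rho(f,h) = \sum_n 2^{-n}\bigl( \sup_{s \in A_n} \|f(s)-h(s)\| \wedge 1 \bigr)$ on functions defined on $D$: $\rho$-convergence is exactly uniform convergence on each $A_n$, hence implies pointwise convergence on $D$. The total boundedness on each $A_n$ (controlling $A_1, \dots, A_N$ at once, since they are nested, and estimating the tail by $2^{-N}$) makes the family $\rho$-totally bounded, so its $\rho$-closure $\mathcal{K}$ is a compact metric space of genuine functions on $D$. Extending every element of $\mathcal{K}$ by a fixed value (say $0$) on the null set $S \setminus D$ gives a set $\tilde X$ of measurable functions on $S$; since these functions agree off $D$, pointwise convergence on $S$ coincides with pointwise convergence on $D$, which is coarser than $\rho$-convergence. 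The identity map $(\mathcal{K},\rho) \to (\tilde X, \text{pointwise})$ is therefore a continuous bijection from a compact space onto a Hausdorff space, hence a homeomorphism, and $\tilde X$ is compact and metrizable in the topology of pointwise convergence.

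The step I expect to be the main obstacle is passing from essential-supremum (equivalence-class) compactness to a genuinely pointwise-controlled family of representatives, that is, producing $\mathcal{K}$ honestly. Here I would fix a countable set $\{f_j\} \subseteq X$ whose restrictions are $\|\cdot\|_\infty$-dense in $\{\chi_{A_n} f\}$ for every $n$ (possible by separability of these compact metric spaces), choose one measurable representative of each $f_j$, and discard a single $\mu$-null set off which the true supremum over each $A_n$ equals the essential supremum for every pair $f_j, f_k$; after shrinking $D$ by this null set the chosen representatives form a $\rho$-isometric copy of $\{f_j\}$, whose $\rho$-closure is $\mathcal{K}$. A general class $f \in X$ is a $\|\cdot\|_\infty$-limit on each $A_n$ of a subsequence of the $f_j$, so the corresponding representatives are $\rho$-Cauchy and their pointwise limit on $D$ is a genuine representative of $f$ lying in $\mathcal{K}$; this shows $\tilde X \subseteq \,]X[$ and that $\tilde X$ contains a representative of every element of $X$, which is what the application in Corollary~\ref{cor:intro} requires. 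The only point genuinely needing the dominating function $g$ is the verification that the remaining limit points of $\mathcal{K}$ still represent elements of $X$: uniform-on-$A_n$ convergence together with the domination $\|f(s)\| \le \|g(s)\|$ yields $L_1$-convergence by dominated convergence, so these limits represent $L_1$-limit points of $X$ and hence lie in $]X[$ once $X$ is taken closed, as it is in the application where $F$ is closed.
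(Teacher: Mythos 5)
Your proof is correct, and its skeleton is the same as the paper's: extract the exceptional sets $E_n$ from the definition of $\mu$-uniform compactness, reduce to a countable family of classes that is dense in the relevant sup-metrics, fix measurable representatives, delete a single $\mu$-null set so that essential suprema become honest pointwise suprema for all pairs in the family, and then manufacture a representative of every class in $X$ as a pointwise limit. Where you genuinely diverge is in how compactness and metrizability in the pointwise topology are certified at the end. The paper keeps the $E_n$ decreasing, extracts $L_1$-convergent subsequences, shows they converge pointwise on a full-measure set $S''$, defines $\tilde X$ as the resulting canonical versions, and then asserts tersely that $\tilde X$ is sequentially compact and ``metrizable by means of the $L_1$-metric''; making that rigorous requires checking that $L_1$-convergence \emph{within} $\tilde X$ forces pointwise convergence, which the paper leaves implicit. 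You instead make the complements $A_n$ increasing (justifying this with the Lipschitz patching argument, a step the paper also needs but does not write down), introduce the Fr\'echet-type metric $\rho(f,h)=\sum_n 2^{-n}\bigl(\sup_{A_n}\|f-h\|\wedge 1\bigr)$, obtain compactness of the $\rho$-closure $\mathcal{K}$ from total boundedness plus completeness, and transfer compactness and metrizability to the coarser Hausdorff pointwise topology via the continuous-bijection-from-a-compactum argument. That device does exactly the work the paper's final sentence glosses over, and it buys a more transparent proof; what it costs is the closure issue you flag at the end, since $\mathcal{K}$ may contain limit points representing classes outside $X$.

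On that last point, your justification is the one imprecise step: closedness of $F$ does not make $X\subseteq F$ closed, so ``as it is in the application where $F$ is closed'' does not literally dispose of the problem. The correct patch is to replace $X$ by its $L_1$-closure: that closure is still $\mu$-uniformly compact (sup-metric compactness on each $S\setminus E_n$ and the domination by $g$ both pass to $L_1$-limits), it still lies in $F$ because $F$ is closed, and enlarging $X$ only helps the intersection hypothesis of Corollary~\ref{cor:intro}. Note that the paper needs the same hypothesis: a sequence ``whose $L_1$-accumulation points comprise every function in $X$'' can exist only if $X$ equals a set of cluster points, hence is closed, so the assumption is buried there rather than addressed.
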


\begin{proof}
Let $E_n$ be a decreasing sequence in $\Sigma$ satisfying 
$\mu(E_n)=\tfrac1n$ such that
\[
	\{\chi_{\Omega\setminus E_n} f\colon f\in X\}\,, 
\]
is compact for the metric $\|f-g\|_\infty\wedge 1$.  

Let $[f_n]$ be a sequence of equivalence classes in $X$ 
whose $L_1$-accumulation points comprise every function in $X$. 
Pick a version $f_n$ from each equivalent class $[f_n]$.
There exists $S'$  satisfying $\mu(S')=1$ such that for any $\ell, m,n$ and 
$s\in S'\setminus E_m$ we have
\[
	\|\chi_{S'\setminus E_m} f_\ell - \chi_{S'\setminus E_m} f_n\|_\infty \ge 
	\|f_\ell(s)- f_n(s)\|\,.
\]
Now if $f_{n'}$ is a $L_1$-converging subsequence sequence, then $f_{n'}$ 
converges pointwise on $S'$. Furthermore, if two subsequence $f_{n'}$ and $f_{n''}$
converge in $L_1$ to $[f]$, then they converge pointwise on $S'$ to the same 
version of $[f]$. This implies that there is a measurable set $S''\subseteq S'$ 
of full measure such that if $f_{n'}$ converges to $f_n$ for a fixed $n$, 
then it converges pointwise on $S''$ to $f_n$. 

So for each $[f]\in X$ we can pick a version $f$ that is zero on the complement of
$S''$ and equal to the pointwise limit on $S''$ of any $L_1$-convergent to $f$
subsequence of $f_n$. We see that the collection $\tilde X$ of these versions is 
sequentially compact,  and metrizable by means of the $L_1$-metric. 
\end{proof}

\bibliographystyle{aomplain}

\bibliography{biblio}

\end{document}